\def\re{\operatorname{Re}}
\def\im{\operatorname{Im}}
\def\deg{\operatorname{deg}}
\def\cv{\operatorname{cv}}
\def\fix{\operatorname{Fix}}
\def\ann{\operatorname{ann}}
\def\Z{\mathbb{Z}}
\def\R{\mathbb{R}}
\def\C{\mathbb{C}}
\def\B{\mathcal{B}}
\newtheorem{theorem}{Theorem}[section]
\newtheorem{lemma}{Lemma}[section]
\newtheorem{corollary}{Corollary}[section]
\newtheorem{proposition}{Proposition}[section]
\theoremstyle{remark}
\newtheorem{remark-no}{Remark}[section]
\newtheorem*{remark}{Remark}
\newtheorem*{ack}{Acknowledgment}
\newtheorem{problem}{Problem}
\numberwithin{equation}{section}
\begin{document}

\title{On the multipliers of repelling periodic points of entire functions}

\author{Walter Bergweiler
and Dan Liu\thanks{Supported by the NNSF of China (Grant No.\ 11701188, 11371149).}}
\date{}
\maketitle

\begin{abstract}
We give a lower bound for the multipliers of repelling periodic points of entire functions.
The bound is deduced from a bound for the multipliers of fixed points of composite 
entire functions.

\medskip

Keywords: Transcendental entire function, repelling fixed point, repelling periodic point,
multiplier.

\medskip

2010 Mathematics subject classification: Primary 37F10; Secondary 30D05.
\end{abstract}

\section{Introduction and results}
Let $f$ be an entire function.
The iterates $f^n$ are defined by $f^0(z)=z$ and $f^n(z)=f(f^{n-1}(z))$ for $n \geq 1$.
A point $\xi\in\C$ is called a \emph{periodic point} of~$f$ if $f^n(\xi)=\xi$ for some $n\geq 1$.
The smallest $n$ with this property is the \emph{period} of~$\xi$.
A periodic point of period $1$ is called a \emph{fixed point}.
Let $\xi$ be a periodic point of period $n$ of $f$. Then $(f^n)'(\xi)$ is called
the \emph{multiplier} of $\xi$ and the point $\xi$ is called \emph{repelling} if
$|(f^n)'(\xi)| > 1$.

The periodic points play an important role in complex dynamics; for an introduction
to complex dynamics we refer to~\cite{Milnor1999,Steinmetz} for rational 
and~\cite{Bergweiler1993,Schleicher2010} for entire functions.
For example, the Julia set of a rational or entire function, which is defined as the
set where the iterates fail to be normal, is the closure of the set of repelling periodic points.
For rational functions this was proved by both Fatou and Julia (see~\cite[\S 14]{Milnor1999}
for an exposition of both proofs); for entire functions this result is
due to Baker~\cite{Baker1968}.
Before Baker's result it was not even known whether a transcendental entire function
must have repelling periodic points at all.
It turns out~\cite[Theorem~1]{Bergweiler1991} that a transcendental entire function $f$ actually has
repelling periodic points of period $n$ for every $n\geq 2$. In fact, for $n\geq 2$
there exists a sequence $(\xi_k)$ of repelling periodic points of period $n$ 
such that $|(f^n)'(\xi_k)|\to\infty$ as $k\to\infty$; see~\cite[Theorem~ 1.2]{Bergweiler2005}.
In this paper we will give a lower bound for $|(f^n)'(\xi_k)|$ in terms of 
the maximum modulus
\begin{equation}  \label{1a1}
M(r,f)=\max_{|z|=r}|f(z)|
\end{equation}
of~$f$, under a suitable additional hypothesis.

The Eremenko-Lyubich class $\B$ consists of all transcendental entire functions $f$ 
for which the set of critical and asymptotic values is bounded.
It was introduced in \cite{Eremenko1992} and  plays an important role in transcendental
dynamics; see~\cite[\S 3]{Schleicher2010}. For this class a lower bound for the multipliers of fixed points was 
given by Langley and Zheng~\cite[Theorem~2]{Langley}
who proved that there is a positive constant $c$ such that if $f\in\B$ 
and $0< \alpha <1$, then there exists a sequence $(\xi_k)$ of
fixed points of $f$ tending to $\infty$ such that 
\begin{equation}  \label{1a}
|f'(\xi_k)| > c\log M(\alpha |\xi_k|, f).
\end{equation}
Of course, this result does not hold outside the class $\B$, since an entire 
function need not have any fixed points at all.

Noting that if $f\in\B$ and $n\geq 1$, then $f^n\in\B$,
we see that there is also a sequence $(\xi_k)$ of
fixed points of $f^n$ such that 
\begin{equation}  \label{1b}
|(f^n)'(\xi_k)| > c\log M(\alpha |\xi_k|, f^n).
\end{equation}
It is reasonable to conjecture that for $n\geq 2$ a result of this type also holds outside
the class~$\B$.
We shall show that this is the case under an additional hypothesis
involving the minimum modulus 
\begin{equation}  \label{1a2}
m(r,f)=\min_{|z|=r}|f(z)| .
\end{equation}
More precisely, we shall require that there exist 
positive constants $a$ and $b$ with $b>1$ such that, for large~$r$,
\begin{equation}  \label{1a3}
\text{there exists} \ \rho  \in (r,br)
\ \text{such that}\  m(\rho,f) \leq a.
\end{equation}
This condition occurs in work of Rippon and Stallard on slow escaping points of 
entire functions~\cite[Theorem~2]{Rippon2011}. For example, it is clearly satisfied
if $f$ is bounded on a curve tending to~$\infty$. Functions in $\B$ satisfy
the latter condition.

We note that it follows from Harnack's inequality
that the existence of positive constants $a$ and $b$ satisfying~\eqref{1a3} is equivalent to 
the existence of, for any given~$\varepsilon>0$, a constant $c>1$ such that, for large~$r$,
\begin{equation}  \label{1a4}
\text{there exists} \ \rho  \in (r,cr)
\ \text{such that}\  \log m(\rho,f) \leq (1-\varepsilon)\log M(\rho,f);
\end{equation}
see the remark after Lemma~\ref{lemma5} below.
\begin{theorem}\label{thm1}
Let $f$ be a transcendental entire function satisfying~\eqref{1a3} and $n\geq 2$.
Then there exists a positive constant $c$ 
and a sequence $(\xi_k)$ of periodic  points of period $n$
tending to $\infty$ as $k\to\infty$ such that
\begin{equation}  \label{1c}
|(f^n)'(\xi_k)| > c\log M((1-o(1))|\xi_k|, f^n).
\end{equation}
\end{theorem}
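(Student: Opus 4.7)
The plan is to reduce Theorem~\ref{thm1} to a bound on the multipliers of fixed points of \emph{composite} entire functions, applied to $f^n$. Since a periodic point whose period divides $n$ is exactly a fixed point of $f^n$, I would first produce a sequence $(\xi_k)$ of fixed points of $f^n$ with $|\xi_k|\to\infty$ satisfying~\eqref{1c}, and then argue that the $\xi_k$ can be chosen of exact period~$n$.

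The main technical step will be a composition lemma of the following shape: if $g_1,\ldots,g_n$ are transcendental entire functions, $n\ge 2$, and the innermost factor $g_n$ satisfies condition~\eqref{1a3}, then $G:=g_1\circ g_2\circ\cdots\circ g_n$ admits a sequence of fixed points escaping to infinity with multiplier $|G'(\zeta_k)|>c\log M((1-o(1))|\zeta_k|,G)$. Specialising to $g_1=\cdots=g_n=f$ (which is legitimate because~\eqref{1a3} is assumed on $f$) then yields~\eqref{1c} for $f^n$. To prove the composition lemma, I would use~\eqref{1a3} to pick $\rho_k\to\infty$ and $z_k$ with $|z_k|=\rho_k$ and $|g_n(z_k)|\le a$. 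The equivalent form~\eqref{1a4}, obtained via Harnack as indicated in the paper, supplies the quantitative statement $\log m(\rho_k,g_n)\le(1-\varepsilon)\log M(\rho_k,g_n)$, so that $g_n$ varies strongly on the circle $|z|=\rho_k$. Combining this with Wiman--Valiron-type estimates for $g_n$ near its maximum-modulus point and standard growth and distortion estimates for the outer factors $g_1,\ldots,g_{n-1}$, I would locate a disk $D_k$ centred near a point $\zeta_k$ with $|\zeta_k|\asymp\rho_k$ on which the function $G(z)-z$ has nonzero winding about the origin along $\partial D_k$. Rouch\'e then produces a fixed point $\xi_k\in D_k$, and the derivative estimate on $D_k$ gives the multiplier lower bound.

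To upgrade ``period dividing $n$'' to ``period exactly $n$'', I would observe that for each proper divisor $d\mid n$, the fixed points of $f^d$ form the zero set of the nonzero entire function $f^d(z)-z$ and hence a discrete subset of $\C$. The Rouch\'e count in each $D_k$ yields many solutions of $f^n(z)=z$, of which only finitely many can also satisfy $f^d(z)=z$ for any fixed $d<n$; since there are only finitely many such divisors, at least one zero of $f^n(z)-z$ in $D_k$ has exact period $n$ once $k$ is large. Alternatively, the freedom in choosing $\rho_k$ (any admissible value near the given one works) can be used to avoid the countable exceptional set directly.

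The main obstacle is the composition lemma itself. The hypothesis~\eqref{1a3} only controls $|g_n|$ pointwise; it provides no a~priori information about $|g_n'|$ or about the outer factors. The technical heart is to choose $D_k$ with three competing properties: large enough that Rouch\'e on $\partial D_k$ produces fixed points of $G$; small enough that the derivative of $G$ on $D_k$ is uniformly comparable to $\log M(|\zeta_k|,G)/|\zeta_k|$, so that the multiplier inherits the required lower bound; and positioned so that the outer composition $g_1\circ\cdots\circ g_{n-1}$ neither collapses nor over-expands the image $g_n(D_k)$ back to a set that meets $D_k$. Negotiating these constraints, with~\eqref{1a4} as the essential quantitative tool, is where the bulk of the work will lie.
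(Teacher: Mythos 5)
Your high-level strategy is the same as the paper's: reduce Theorem~\ref{thm1} to a statement about fixed points of composite entire functions, and then show that the fixed points so produced have exact period~$n$. But the proposal differs at several crucial points and in its present form contains real gaps.

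First, the placement of hypothesis~\eqref{1a3} is backwards relative to what the argument needs. The paper proves Theorem~\ref{thm2} for $f\circ g$ assuming~\eqref{1a3} on the \emph{outer} factor~$f$, and then applies it with $g=f^{n-1}$. The reason is structural: Wiman--Valiron theory localizes the inner factor $g$ near a maximum-modulus point $z_0$, so a small disk around $z_0$ is mapped by $g$ onto a thick annulus around $M(r,g)$. Condition~\eqref{1a3} on $f$ then guarantees that $f$ has both small and large values on circles in that annulus, which (via the Pommerenke/Harnack argument of Lemma~\ref{lemma6}) forces a point where the rescaled composite has large spherical derivative. Your lemma instead places~\eqref{1a3} on the \emph{innermost} factor $g_n$. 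You use it to find $z_k$ with $|g_n(z_k)|\leq a$, but then $G(z_k)=g_1(\cdots(g_n(z_k)))$ is just the outer factors evaluated on a bounded set, and there is no mechanism by which this produces a fixed point of $G$ with large multiplier near $z_k$. The subsequent appeal to Wiman--Valiron near the max-modulus point of $g_n$ does not mesh with this, since the small-value point and the max-modulus point of $g_n$ are different points on the circle, and you never explain how the information at one is transported to the other.

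Second, the passage from ``derivative estimate plus Rouch\'e'' to a multiplier lower bound does not work as stated. Knowing that $G(z)-z$ has a zero in $D_k$ and that $|G'|$ is large \emph{somewhere} in $D_k$ does not give a lower bound for $|G'|$ \emph{at the fixed point}. The paper handles this by a two-step argument that you omit entirely: the Ahlfors islands theorem (Lemma~\ref{lemma9}) upgrades a large spherical derivative into a conformal map from a small subdomain onto one of three fixed disks, and then the Schwarz-lemma argument (Lemma~\ref{lemma8}) produces a fixed point whose multiplier is bounded below by the ratio of the disk radii. Without Ahlfors there is no conformal island, and without the Schwarz-lemma step there is no pointwise multiplier bound. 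This is the technical heart of the proof, and it cannot be replaced by a winding-number argument on $\partial D_k$.

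Third, your mechanism for forcing exact period~$n$ has a flaw. You claim that the Rouch\'e count gives ``many'' solutions of $f^n(z)=z$ in $D_k$ and that only finitely many of these can solve $f^d(z)=z$ for $d<n$. But the construction (either yours or the paper's) produces \emph{one} fixed point per disk, not a count tending to infinity, so the pigeonhole does not close; and the zero set of $f^d(z)-z$ is unbounded, so discreteness alone does not prevent $\xi_k\in\fix(f^d)$ for infinitely many~$k$. The paper's route is different and more robust: Theorem~\ref{thm2} carries the extra conclusion~\eqref{1e}, $|g(\xi_k)|\geq(1-o(1))M(|\xi_k|,g)$, which with $g=f^{n-1}$ shows that $|f^{n-1}(\xi_k)|$ is comparable to $M(|\xi_k|,f^{n-1})$. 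If $\xi_k$ had period $m<n$, then $f^{n-1}(\xi_k)=f^{(n-1)\bmod m}(\xi_k)$ would be bounded by $M(|\xi_k|,f^{m-1})$, which is of strictly smaller order. So the point cannot have smaller period once $k$ is large. If you want to pursue your composition lemma, you will need to build in an analogue of~\eqref{1e}, or supply a genuinely quantitative Rouch\'e count, before the exact-period step is sound.

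Finally, a minor but real point: the paper's reduction uses a \emph{binary} decomposition $f^n=f\circ f^{n-1}$ and then a separate estimate~\eqref{4i}, based on~\eqref{4g}, to convert $\log M(\beta M(|\xi_k|,f^{n-1}),f)$ into $\log M((1-o(1))|\xi_k|,f^n)$. Your proposed $n$-fold lemma would sidestep this conversion but is strictly harder to prove, and nothing in your sketch explains how the intermediate factors $g_2,\ldots,g_{n-1}$ would actually be controlled. The binary decomposition is the cleaner route.
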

We note that the bound given in~\eqref{1c} is of the right order of magnitude. Indeed,
if $f(z)=e^z$ and $f^n(\xi)=\xi$, then
\begin{equation}  \label{1c1}
(f^n)'(\xi)=\prod_{j=1}^n f^j(\xi)=\prod_{j=0}^{n-1} f^j(\xi)
\end{equation}
and hence
\begin{equation}  \label{1c2}
|(f^n)'(\xi)|\leq \prod_{j=0}^{n-1} f^j(|\xi|)=f^{n-1}((1+o(1))|\xi|)=
\log M((1+o(1))|\xi|, f^n)
\end{equation}
as $|\xi|\to\infty$.

Theorem~\ref{thm1} follows from a result concerning composite entire functions.
It was shown in~\cite[Theorem~3]{Bergweiler1991} that if $f$ and $g$ are transcendental entire
functions, then $f\circ g$ has infinitely many repelling fixed points. 
Here we prove the following result.
\begin{theorem}\label{thm2}
Let $f$ and $g$ be transcendental entire functions, with $f$
satisfying condition~\eqref{1a3}.
Then there exists positive constants $\alpha$ and $\beta$
such that $f\circ g$ has a sequence $(\xi_k)$ of fixed points
tending to $\infty$ and satisfying
\begin{equation}  \label{1d}
|(f\circ g)'(\xi_k)|\geq \alpha \log M(\beta M(|\xi_k|,g),f)
\end{equation}
and
\begin{equation}  \label{1e}
|g(\xi_k)| \geq (1-o(1))M(|\xi_k|, g).
\end{equation}
\end{theorem}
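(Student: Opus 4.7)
The plan is to produce the fixed points of $f\circ g$ by a Rouch\'e-type argument inside a conformal preimage (an ``island'') of $g$ chosen so that its $g$-image contains a point where $f$ is small (supplied by~\eqref{1a3}), and then to estimate $|(f\circ g)'(\xi_{k})|$ by combining a Koebe distortion bound on the island with a mean-value estimate for $f'$. To set up, fix a large parameter $R$ and set $r=M(R,g)$. Applying~\eqref{1a3} to $f$ at radius $r$ yields $\rho\in(r,br)$ and a point $w_{0}$ on the circle $|w|=\rho$ with $|f(w_{0})|\le a$. The equivalent form~\eqref{1a4} coming from Harnack's inequality simultaneously furnishes points on the same circle where $|f|$ is nearly as large as $M(\rho,f)$, so $f$ varies enormously across $B(w_{0},s)$ once $s$ is comparable to $\rho$; this controlled oscillation of $f$ drives both the winding-number step and the derivative lower bound.

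\textbf{Island for $g$.} Apply the Ahlfors islands theorem (or a Zalcman-type rescaling at points of large relative derivative of $g$) to $g$ on an annulus near $|\zeta|=R$, with target Jordan domains including $B(w_{0},s)$ for $s$ chosen of order $\rho$ (large enough that the disk reaches points where $|f|$ is substantial). This produces a simply connected domain $D\subset\{|\zeta|\asymp R\}$ and a conformal bijection $g|_{D}\colon D\to B(w_{0},s)$. Write $\phi=(g|_{D})^{-1}$. Koebe's distortion theorems then give $|\phi'(w)|\lesssim\operatorname{diam}(D)/s$ and, equivalently, $|g'(\phi(w))|\gtrsim s/\operatorname{diam}(D)$ on a central subdisk.

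\textbf{Fixed points via Rouch\'e.} A fixed point $\xi\in D$ of $f\circ g$, written $\xi=\phi(w)$, corresponds to a zero of $h(w):=f(w)-\phi(w)$ in $B(w_{0},s)$. On a subcircle $\{|w-w_{0}|=s'\}\subset B(w_{0},s)$, chosen with $s'=o(\rho)$ yet large enough for the localized~\eqref{1a4} to expose values of $|f|$ reaching $M(\rho,f)^{1-\varepsilon}$, the bound $\|\phi\|_{\infty}\le\operatorname{diam}(D)\asymp R\ll M(\rho,f)^{1-\varepsilon}$ is dwarfed by $|f|$, and a Rouch\'e/argument-principle count yields a zero $w_{k}^{*}\in B(w_{0},s')$. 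Set $\xi_{k}=\phi(w_{k}^{*})$; since $|w_{k}^{*}-w_{0}|<s'=o(\rho)$ and $\rho\ge r=M(R,g)\ge(1-o(1))M(|\xi_{k}|,g)$, we obtain $|g(\xi_{k})|\ge(1-o(1))M(|\xi_{k}|,g)$, establishing~\eqref{1e}.

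\textbf{Derivative estimate and main obstacle.} From the chain rule, $(f\circ g)'(\xi_{k})=f'(w_{k}^{*})\,g'(\xi_{k})$. Koebe delivers $|g'(\xi_{k})|\gtrsim s/\operatorname{diam}(D)$, while the variation of $|f|$ from $\le a$ at $w_{0}$ to $\ge M(\rho,f)^{1-\varepsilon}$ at some point of $B(w_{0},s)$ forces $|f'|\gtrsim M(\rho,f)^{1-\varepsilon}/s$ at some point of the connecting segment; arranging the Rouch\'e contour so that this lower bound transfers to $w_{k}^{*}$ gives $|f'(w_{k}^{*})|$ of the same order. Multiplying, $|(f\circ g)'(\xi_{k})|\gtrsim M(\rho,f)^{1-\varepsilon}/\operatorname{diam}(D)$, which dwarfs $\log M(\beta\rho,f)\asymp\log M(\beta M(|\xi_{k}|,g),f)$ for any fixed $\beta$. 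The main obstacle is precisely the coordination of the three scales $s$, $s'$, and $\operatorname{diam}(D)$ so that the Rouch\'e zero $w_{k}^{*}$ lands near a point where $|f'|$ attains its mean-value lower bound, rather than near the small-value point $w_{0}$ itself; this is where the full strength of hypothesis~\eqref{1a3} (not merely the existence of some small value of $f$) is indispensable.
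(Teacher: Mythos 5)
Your proposal takes a genuinely different route from the paper (an island for $g$ plus Rouch\'e in the $w$-plane, instead of Wiman--Valiron combined with Ahlfors applied to a rescaled $f\circ g$ and the Schwarz-lemma argument of Lemma~\ref{lemma8}), but there are three gaps, and the third shows that your claimed estimate cannot actually hold.

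First, the island step is not justified. Lemma~\ref{lemma9} (Ahlfors' theorem) produces an island over one of \emph{three fixed} Jordan domains, and only after the spherical derivative has been shown to be large at the relevant point. Your target $B(w_0,s)$ with $s\asymp\rho\asymp M(R,g)$ is a single, $R$-dependent disk, and you never control a (rescaled) $g^\#$ near $|\zeta|=R$. The paper obtains the needed covering behaviour of $g$ through Wiman--Valiron theory, which moreover supplies the family of local inverses $\phi_j$ of Lemma~\ref{lemma3}; these are indispensable in the paper's final fixed-point step and have no analogue in your sketch.

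Second, the Rouch\'e argument is not carried out. To compare $f$ with $h=f-\phi$ on $\{|w-w_0|=s'\}$ one needs $|f|>|\phi|\asymp R$ on the \emph{entire} circle and one needs to know that $f$ has zeros inside; neither follows from \eqref{1a3} or \eqref{1a4}, which concern circles $|w|=\rho$, not small disks around $w_0$ (the point of maximal $|f|$ on $|w|=\rho$ could be far from $w_0$). More fundamentally, the pointwise lower bound on $|f'|$ at the Rouch\'e zero $w_k^*$ is never obtained: the mean-value argument bounds $\sup|f'|$ along a connecting segment, not $|f'(w_k^*)|$, and you explicitly flag this coordination of scales as the main unresolved obstacle. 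Getting around exactly this is what the paper's strategy is designed for: Lemma~\ref{lemma6} (Pommerenke-type) pins down a specific point $w_0$ with $|H(w_0)|=1$ and $|H'(w_0)|$ large, this is lifted to $u_0$ with $g(u_0)=w_0$ where the \emph{spherical} derivative $G^\#(u_0)$ of the rescaled composite is controlled, and Ahlfors plus Lemmas~\ref{lemma3} and~\ref{lemma8} then produce a fixed point whose multiplier is bounded below by $\varepsilon/(4\delta)$ without ever evaluating $(f\circ g)'$ at an uncontrolled point.

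Third, the end estimate $|(f\circ g)'(\xi_k)|\gtrsim M(\rho,f)^{1-\varepsilon}/\operatorname{diam}(D)$ is far too strong and is ruled out by the example in the paper. For $f=g=\exp$ and $\rho\asymp M(R,g)=e^R$ it reads $|(f^2)'(\xi_k)|\gtrsim \exp\bigl((1-\varepsilon)e^R\bigr)/R$, whereas the computation after Theorem~\ref{thm1} shows that fixed points of $f^2$ at modulus about $R$ have multiplier at most $\exp\bigl((1+o(1))R\bigr)$. The doubly-exponential overshoot confirms that the chain of inequalities in your derivative step cannot be made to work.
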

In our proofs we will use in part the ideas developed in~\cite{Bergweiler1991}, with the main tools coming
from Wiman-Valiron theory and Ahlfors' theory of covering surfaces.
In addition, we use an estimate of the spherical derivative based on
a method from~\cite[Theorem~5.2]{Eremenko2007}.
\begin{ack}
We thank the referees for many helpful suggestions.
\end{ack}

\section{Preliminary lemmas}
For $a\in\C$ and $r>0$, let $D(a,r)=\{z\in\C\colon |z-a|<r\}$ be the disk of radius 
$r$ centered at~$a$ and, for $R>r>0$, let $\ann(r,R)=\{z\in\C\colon r<|z|<R\}$
be the annulus centered at $0$ with radii $r$ and~$R$.

We begin by summarizing some results of Wiman-Valiron theory.
Let 
\[
g(z)=\sum\limits_{k=0}^{\infty}a_kz^k
\]
be a transcendental entire function.
Then 
\[
\mu(r,g):=\max_{k\geq 0}|a_k|r^k
\quad\text{and}\quad
\nu(r,g):=\max\{k\colon \mu(r,g)=|a_k|r^k\}
\]
are called the \emph{maximum term} and the \emph{central index}, respectively.
Note that $\mu(r, g)\leq M(r, g)$ by Cauchy's inequality.

Wiman-Valiron theory describes the behavior of an entire function $g$ near points of
maximum modulus; that is, near points $z_0$ for which $|g(z_0)|=M(|z_0|,g)$.
Essentially, it says that near such points, and
for $r=|z_0|$ outside an exceptional set $F$ of finite logarithmic measure, the  function
behaves like a polynomial of degree $\nu(r,g)$.
Here a subset $F$ of $[1,\infty)$ is said to have finite logarithmic measure 
if 
\[
\int_F \frac{dr}{r}<\infty.
\]
One consequence of the theory is that for each $C>0$ there exists $K>0$ such that 
$g(D(z_0,Kr/\nu(r,g)))\supset \ann(|g(z_0)|/C,C|g(z_0)|)$, if $z_0$ is a point of
maximum modulus and $|z_0|=r\notin F$ is sufficiently large.

The following lemma collects some of the main results of 
Wiman-Valiron theory; see~\cite{Hayman1974,Valiron1923}.
An alternative approach to results of this type is given in~\cite{Bergweiler2008},
with the central index $\nu(r,g)$ replaced by $d\log M(r,f)/d\log r$.

\begin{lemma}\label{lemma1}
Let $g$ be a transcendental entire function and let $K$ be a positive constant.
Then there exists a set $F$ of finite logarithmic measure such that 
if $|z_0|=r\notin F$, $|g(z_0)|= M(r,g)$  and $|\tau| \leq K/\nu (r,g)$, then
\begin{equation}  \label{2a}
g(z_0 e^{\tau})\sim g(z_0)e^{\nu (r,g) \tau},
\end{equation}  
\begin{equation}  \label{2a1}
|g(z_0 e^{\tau})|
\sim
M(|z_0 e^{\tau}|,g)
\end{equation}  
and
\begin{equation}  \label{2b}
g'(z_0e^{\tau}) \sim \frac{\nu (r,g)}{z_0 e^{\tau}}g(z_0 e^{\tau})       
\end{equation}  
as $r\to\infty$.
Moreover,
\begin{equation}  \label{2d}
\log \mu(r,g) \sim \log M(r,g)   
\end{equation}  
as $r\to\infty$, $r\notin F$, and, given $\varepsilon>0$,
\begin{equation}  \label{2c}
\nu(r,g) \leq (\log \mu(r,g))^{1+\varepsilon}    
\end{equation}  
for large $r\notin F$.
\end{lemma}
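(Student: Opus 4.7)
The plan is to follow the classical Wiman--Valiron method, which analyzes the entire function $g(z)=\sum a_k z^k$ via its power series by localizing everything around the central index $N=\nu(r,g)$. The fundamental structural fact is that $t\mapsto \log\mu(e^t,g)$ is convex, non-decreasing and piecewise linear with slope $\nu(e^t,g)$, so that
\[
\log\mu(r,g)=\log|a_0|+\int_0^r\frac{\nu(t,g)}{t}\,dt,
\]
and every subsequent estimate rests on comparing the various terms in the series against the maximum term.

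I would first establish the a priori bounds \eqref{2c} and \eqref{2d}. A standard Borel-type argument shows that if $\nu(r,g)>(\log\mu(r,g))^{1+\varepsilon}$ held on a set of infinite logarithmic measure, then the integral identity above would force $\log\mu$ to outgrow any power of itself, a contradiction; this yields \eqref{2c}. For \eqref{2d}, the number of indices $k$ whose term $|a_k|r^k$ is comparable to $\mu(r,g)$ can be bounded by a power of $\nu(r,g)$, so $M(r,g)\le \mu(r,g)\cdot\nu(r,g)^{O(1)}$, and invoking \eqref{2c} converts this to $\log M(r,g)\sim\log\mu(r,g)$ outside a set $F_0$ of finite logarithmic measure.

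For the local approximations \eqref{2a}, \eqref{2a1} and \eqref{2b} I would write
\[
g(z_0e^\tau)=a_Nz_0^Ne^{N\tau}\sum_{k}\frac{a_kz_0^k}{a_Nz_0^N}e^{(k-N)\tau}
\]
and invoke the classical Wiman--Valiron concentration lemma: outside a further set of finite logarithmic measure, the ratio $|a_kz_0^k|/|a_Nz_0^N|$ decays like $\exp(-c(k-N)^2/N)$ within the window $|k-N|\le N^{2/3}$ and superpolynomially beyond. Combined with $|e^{(k-N)\tau}|\le e^{K|k-N|/N}$ on the disk $|\tau|\le K/N$, this reduces the inner sum to $1+o(1)$, which at $\tau=0$ gives $g(z_0)\sim a_Nz_0^N$ and hence $g(z_0e^\tau)\sim g(z_0)e^{N\tau}$, establishing \eqref{2a}. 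Taking moduli and comparing $|a_Nz_0^Ne^{N\tau}|=\mu(|z_0|,g)$ with $M(|z_0e^\tau|,g)$ via \eqref{2d} gives \eqref{2a1}. Differentiating the series termwise and repeating the concentration argument introduces the extra factor $N/(z_0e^\tau)$ and produces \eqref{2b}.

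The principal technical obstacle is the exceptional-set bookkeeping: the concentration lemma and the estimates \eqref{2c}, \eqref{2d} each hold only outside their own sets of finite logarithmic measure, and one must verify that the union $F$ still has finite logarithmic measure and remains compatible with the assumption $|g(z_0)|=M(r,g)$. All of the individual ingredients are by now classical, and for the purposes of this paper I would simply invoke them from Hayman~\cite{Hayman1974} and Valiron~\cite{Valiron1923} rather than rederiving them.
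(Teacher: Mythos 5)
The paper does not actually prove Lemma~\ref{lemma1}: it states it as a compilation of standard Wiman--Valiron results and simply cites Hayman~\cite{Hayman1974} and Valiron~\cite{Valiron1923}, exactly as you do in your closing paragraph. Your accompanying outline -- the identity $\log\mu(r,g)=\log|a_0|+\int_0^r\nu(t,g)\,dt/t$, the Borel-type exceptional-set argument for \eqref{2c} and \eqref{2d}, and the Gaussian concentration of the power series around the central index for \eqref{2a}--\eqref{2b} -- is a correct sketch of the classical proof found in those references, so you are taking the same route as the paper.
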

The following two lemmas are consequences of Lemma~\ref{lemma1} and Rouch\'{e}'s Theorem;
see \cite[Lemma~2]{Bergweiler1990a} and \cite[Lemma~3]{Bergweiler1990} for the proofs.
In both lemmas, $F$ is the exceptional set arising from Lemma~\ref{lemma1}.

\begin{lemma}\label{lemma2}
Let $g$ be a transcendental entire function,
$0<\varepsilon<2\pi$ and $K>0$.
Suppose that $|z_0|=r$, $|g(z_0)|=M(r,g)$ and $\sigma\in\C$ with
$|\sigma|\leq K$.
Then, if $r\notin F$ is large enough, there exists a unique $\tau\in\C$ such that
$|\nu (r,g)\tau- \sigma| < \varepsilon$ and $g(z_0 e^{\tau})=g(z_0)e^{\sigma}$. 
\end{lemma}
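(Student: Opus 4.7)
The plan is to apply Rouché's theorem to the entire functions
\[
\phi(\tau) = g(z_0 e^\tau) - g(z_0)e^\sigma
\quad\text{and}\quad
\psi(\tau) = g(z_0)\bigl(e^{N\tau} - e^\sigma\bigr),
\]
where $N := \nu(r,g)$, on the closed disk $\Delta = \{\tau\in\C : |N\tau - \sigma|\leq \varepsilon\}$. A value of $\tau$ with the properties required by the lemma is precisely a zero of $\phi$ in the open disk, so it suffices to show that $\phi$ has exactly one zero in $\Delta$ whenever $r\notin F$ is large enough.

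First, $\psi$ vanishes exactly where $N\tau - \sigma\in 2\pi i\Z$. Since $0<\varepsilon<2\pi$, the unique such zero in $\Delta$ is $\tau_0 = \sigma/N$, and it is simple because $\psi'(\tau_0) = g(z_0)Ne^\sigma\neq 0$. To bound $|\psi|$ from below on $\partial\Delta$, I parametrize $N\tau - \sigma = \varepsilon e^{i\theta}$ and compute
\[
|\psi(\tau)| = |g(z_0)|\,|e^\sigma|\,\bigl|e^{\varepsilon e^{i\theta}} - 1\bigr| \geq |g(z_0)|\, e^{-K}\delta,
\]
with $\delta := \min_{\theta}|e^{\varepsilon e^{i\theta}} - 1| > 0$ depending only on $\varepsilon$; strict positivity rests on $\varepsilon e^{i\theta}\neq 2\pi ik$, which holds for every $k\in\Z$ since $0<\varepsilon<2\pi$. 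For the bound on $|\phi - \psi|$, note that every $\tau\in\Delta$ satisfies $|\tau|\leq (K+\varepsilon)/N$, so \eqref{2a} (applied with constant $K+\varepsilon$) yields
\[
|\phi(\tau) - \psi(\tau)| = \bigl|g(z_0 e^\tau) - g(z_0)e^{N\tau}\bigr| = o\bigl(|g(z_0)|\bigr)
\]
uniformly on $\partial\Delta$ as $r\to\infty$ with $r\notin F$, using $|e^{N\tau}|\leq e^{K+\varepsilon}$. For such $r$ large enough the right-hand side is strictly smaller than the lower bound on $|\psi|$, and Rouché's theorem produces exactly one zero of $\phi$ in $\Delta$, as required.

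The only subtle point is the uniformity in $\tau$ (and in $\sigma$ for $|\sigma|\leq K$) of the asymptotic in \eqref{2a}; this is the content of Lemma~\ref{lemma1}, whose conclusion is valid uniformly in $\tau$ throughout the disk $|\tau|\leq (K+\varepsilon)/\nu(r,g)$ used above, so taking the supremum over $\partial\Delta$ and over $|\sigma|\leq K$ causes no further difficulty.
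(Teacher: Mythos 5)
Your proof is correct and follows the approach the paper indicates: it is a Rouché-theorem argument resting on the Wiman-Valiron asymptotic \eqref{2a}, which is precisely what the authors say (referring to the cited works for details). The decomposition into $\phi$ and $\psi$, the identification of the unique simple zero $\tau_0=\sigma/N$ of $\psi$ in $\Delta$ using $0<\varepsilon<2\pi$, the lower bound $|\psi|\geq |g(z_0)|e^{-K}\delta$ on $\partial\Delta$, and the $o(|g(z_0)|)$ upper bound on $|\phi-\psi|$ via \eqref{2a} with constant $K+\varepsilon$ are all sound, and the uniformity you flag is indeed part of the standard Wiman-Valiron statement.
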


\begin{lemma}\label{lemma3}
Let $g$ be a transcendental entire function,
$K>0$ and $j\in\Z$.
Suppose that $|z_0|=r$ and $|g(z_0)|=M(r,g)$.
Then, if $r\notin F$ is large enough, there exists a 
holomorphic function $\tau_j\colon D(z_0,Kr/\nu(r, g))\to\C$
which satisfies
\begin{equation}  \label{2f}
g(ze^{\tau_j(z)}) = g(z),
\end{equation}  
and, as $r\to\infty$,
\begin{equation}  \label{2e}
|\tau_j(z)\nu(r, g) - 2\pi ij|\rightarrow 0
\end{equation}  
and
\begin{equation}  \label{2g}
\frac{d}{dz}(ze^{\tau_j(z)})\to 1 .
\end{equation}  
\end{lemma}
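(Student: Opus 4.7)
The plan is to construct, for each $z$ in the disk $D(z_0,Kr/\nu(r,g))$, a unique value $\tau_j(z)$ near $2\pi ij/\nu(r,g)$ such that $ze^{\tau_j(z)}$ is a preimage under $g$ of $g(z)$, and then to verify the three claimed properties using the asymptotics in Lemma~\ref{lemma1}. First I would replace $K$ by some $K'>K+2\pi|j|+1$ when invoking Lemma~\ref{lemma1}, so that~\eqref{2a} and~\eqref{2b} hold uniformly for all $\tau$ with $|\tau|\leq K'/\nu(r,g)$.

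Fix $z\in D(z_0,Kr/\nu(r,g))$ and write $z=z_0 e^t$, noting that $|t|\leq (K+o(1))/\nu(r,g)$. I would apply Rouch\'e's theorem to the holomorphic function
\[
h(\tau):=g(z_0 e^{t+\tau})-g(z_0 e^t)
\]
on a disk $|\tau-2\pi ij/\nu(r,g)|\leq\varepsilon/\nu(r,g)$, comparing it with the model
\[
H(\tau):=g(z_0)e^{\nu(r,g)t}\bigl(e^{\nu(r,g)\tau}-1\bigr),
\]
which has a unique simple zero at $2\pi ij/\nu(r,g)$ and on the boundary of the disk is bounded below by a positive multiple of $|g(z_0)e^{\nu(r,g)t}|$. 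By~\eqref{2a} the difference $h-H$ is of smaller order than this on the whole disk, so Rouch\'e yields a unique zero $\tau_j(z)$ of $h$ in the disk; as $\varepsilon>0$ is arbitrary, this establishes~\eqref{2e}. Equation~\eqref{2f} holds by construction, since $ze^{\tau_j(z)}=z_0 e^{t+\tau_j(z)}$. Holomorphic dependence of $\tau_j$ on $z$ then follows from the implicit function theorem applied to $G(z,\tau):=g(ze^\tau)-g(z)$, whose $\tau$-derivative $ze^\tau g'(ze^\tau)$ is asymptotic to $\nu(r,g)g(ze^\tau)\neq 0$ by~\eqref{2b}.

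For~\eqref{2g}, I would differentiate the identity $g(ze^{\tau_j(z)})=g(z)$. Writing $w(z):=ze^{\tau_j(z)}$, this gives $w'(z)=g'(z)/g'(w(z))$. Since both $z$ and $w(z)$ lie in a disk on which~\eqref{2b} is valid, one has $g'(z)\sim\nu(r,g)g(z)/z$ and $g'(w)\sim\nu(r,g)g(w)/w=\nu(r,g)g(z)/w$, so $w'(z)\sim w/z=e^{\tau_j(z)}$, which tends to $1$ by~\eqref{2e}.

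The main obstacle is establishing the uniformity of the Wiman-Valiron approximation~\eqref{2a} across the entire disk where Rouch\'e is applied (and similarly for~\eqref{2b} in the derivative computation); this uniformity is part of what Lemma~\ref{lemma1} provides, but it must be invoked carefully, since $z$ itself is not a point of maximum modulus but only close to the base point $z_0$, so the small translation $t$ must be absorbed into a slightly larger constant $K'$ throughout.
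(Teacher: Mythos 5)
The paper does not reproduce a proof of this lemma; it cites [Bergweiler1990, Lemma~3] and notes only that the result is ``a consequence of Lemma~\ref{lemma1} and Rouch\'e's Theorem.'' Your argument uses exactly those ingredients in exactly the expected way (absorbing the translation into a slightly enlarged constant $K'$, constructing $\tau_j(z)$ by Rouch\'e comparison against the model $g(z_0)e^{\nu(r,g)t}\bigl(e^{\nu(r,g)\tau}-1\bigr)$ with the error controlled by~\eqref{2a}, getting holomorphy from the implicit function theorem via~\eqref{2b}, and deriving~\eqref{2g} by logarithmic differentiation of~\eqref{2f} combined with~\eqref{2b} and $g(\phi_j(z))=g(z)$). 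The reasoning is correct and is essentially the proof the paper refers to.
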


The following result is known as Harnack's inequality; see \cite[Theorem~1.3.1]{Ransford1995}.
\begin{lemma}\label{lemma4}
Let $v\colon  D(a,r)\rightarrow \R$ be a positive harmonic function, $0< \rho < 1$ and 
$z \in D(a,r)$ with $|z-a|\leq \rho r$. Then
\begin{equation}  \label{2h}
\frac{1-\rho}{1+\rho} \leq  \frac{v(z)}{v(a)} \leq  \frac{1+\rho}{1-\rho}.
\end{equation}  
\end{lemma}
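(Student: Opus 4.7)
The plan is to deduce Harnack's inequality directly from the Poisson integral representation combined with the mean value property, keeping in mind that $v$ is only assumed harmonic on the open disk $D(a,r)$, so we must first work on a slightly smaller disk and pass to the limit.

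First I would fix $t$ with $|z-a|<t<r$. Since $v$ is positive and harmonic on $D(a,r)$, it is harmonic in a neighborhood of $\overline{D(a,t)}$, and hence representable there by the Poisson formula
\begin{equation*}
v(z)=\frac{1}{2\pi}\int_{0}^{2\pi}\frac{t^{2}-|z-a|^{2}}{|te^{i\theta}-(z-a)|^{2}}\,v(a+te^{i\theta})\,d\theta.
\end{equation*}
Writing $s=|z-a|$ and applying the triangle inequality, $t-s\leq |te^{i\theta}-(z-a)|\leq t+s$, so the Poisson kernel $P(\theta)$ satisfies the uniform two-sided bound
\begin{equation*}
\frac{t-s}{t+s}\leq P(\theta)\leq \frac{t+s}{t-s}.
\end{equation*}

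Next I would multiply through by $v(a+te^{i\theta})\geq 0$, integrate, and invoke the mean value property $v(a)=\frac{1}{2\pi}\int_{0}^{2\pi}v(a+te^{i\theta})\,d\theta$ to obtain
\begin{equation*}
\frac{t-s}{t+s}\,v(a)\leq v(z)\leq \frac{t+s}{t-s}\,v(a).
\end{equation*}
Letting $t\uparrow r$ preserves these inequalities by continuity, giving $(r-s)/(r+s)\,v(a)\leq v(z)\leq (r+s)/(r-s)\,v(a)$. Since $x\mapsto (r-x)/(r+x)$ is decreasing and $x\mapsto(r+x)/(r-x)$ is increasing on $[0,r)$, substituting the hypothesis $s\leq \rho r$ yields \eqref{2h}.

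There is essentially no serious obstacle; the only technical point to watch is the justification of the Poisson representation, for which one must avoid using it on $D(a,r)$ itself (where $v$ may fail to extend continuously to the boundary) and instead work on $D(a,t)$ and take $t\uparrow r$, as above. The sign of the monotonicity in $x$ must also be checked to ensure the bound is worsened, not improved, by replacing $s$ with $\rho r$.
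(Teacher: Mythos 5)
Your proof is correct. The paper does not supply its own proof of Lemma~\ref{lemma4}, but simply cites \cite[Theorem~1.3.1]{Ransford1995}; the argument you give — Poisson representation on a slightly smaller disk $D(a,t)$, the two-sided kernel bound $(t-s)/(t+s)\le P(\theta)\le(t+s)/(t-s)$ from the triangle inequality, the mean value property, the passage $t\uparrow r$, and finally the monotonicity check that replacing $s$ by $\rho r$ only weakens the bounds — is precisely the standard proof found in such references. The one technical point you flag (not applying Poisson directly on $D(a,r)$, where boundary values may not exist) is indeed the right thing to watch, and you handle it correctly.
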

A simple consequence of Harnack's inequality is the following lemma.
Similar results appear in \cite[Theorem~3]{Bergweiler2013}, \cite[Lemma~ 2]{Hinkkanen1994} 
and \cite[Lemma~5, part~(a)]{Rippon2011}.
\begin{lemma}\label{lemma5}
Let $R>0$ and $\varepsilon>0$. Let 
$u\colon \ann(e^{-2\pi/\varepsilon}R,e^{2\pi/\varepsilon}R)\to \R$
be a positive harmonic function.
Then
\begin{equation}  \label{2i}
\min_{|z|=R}u(z)> (1-\varepsilon)\max_{|z|=R}u(z).
\end{equation}  
\end{lemma}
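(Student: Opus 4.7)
The plan is to pass to logarithmic coordinates, where the circle $\{|z|=R\}$ becomes a vertical line inside a strip of width $4\pi/\varepsilon$, and then apply Harnack's inequality (Lemma~\ref{lemma4}) exactly once.

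First I would dispose of the trivial case $\varepsilon\geq 1$: there $(1-\varepsilon)\max u\leq 0<\min u$, and the conclusion is immediate since $u$ is assumed positive. So assume $0<\varepsilon<1$. Setting $v(\zeta)=u(Re^\zeta)$, the function $v$ is positive harmonic on the vertical strip $S=\{\zeta\in\C:|\re\zeta|<2\pi/\varepsilon\}$ and satisfies $v(\zeta+2\pi i)=v(\zeta)$; points of $\{|z|=R\}$ correspond to $\zeta\in i\R$.

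Next, let $\zeta_0=i\theta_0$ and $\zeta_1=i\theta_1$ be points at which $u$ attains its maximum and minimum on $\{|z|=R\}$. Using $2\pi i$-periodicity of $v$, I would replace $\theta_1$ by an equivalent representative modulo $2\pi$ so that $|\theta_0-\theta_1|\leq\pi$. Since $\re\zeta_0=0$, the open disk $D(\zeta_0,2\pi/\varepsilon)$ is contained in $S$, and $\zeta_1$ lies within distance $\pi$ of its centre. Applying Lemma~\ref{lemma4} with $\rho=\varepsilon/2$ (chosen so that $\rho\cdot 2\pi/\varepsilon=\pi$) gives
\[
\frac{\min_{|z|=R}u(z)}{\max_{|z|=R}u(z)}=\frac{v(\zeta_1)}{v(\zeta_0)}\geq\frac{1-\varepsilon/2}{1+\varepsilon/2}>1-\varepsilon,
\]
the last inequality being equivalent to $\varepsilon^2>0$.

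There is no real obstacle here. The only point worth isolating is the choice of strip width: it is engineered so that a Harnack disk centred on the imaginary axis and fitting inside $S$ has radius exactly $2\pi/\varepsilon$, which is just large enough to reach any other point of $i\R$ within angular distance $\pi$ (a distance always attainable by periodicity) with Harnack ratio $\rho=\varepsilon/2<1$.
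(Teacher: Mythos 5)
Your proof is correct and follows essentially the same route as the paper's: pass to logarithmic coordinates so that the circle $\{|z|=R\}$ becomes a vertical line in a strip of half-width $2\pi/\varepsilon$, use $2\pi i$-periodicity to pick representatives of the extremal points within distance $\pi$ of each other, and apply Harnack's inequality (Lemma~\ref{lemma4}) once with $r=2\pi/\varepsilon$ and $\rho=\varepsilon/2$, finishing with $\frac{1-\rho}{1+\rho}>1-2\rho$. The only cosmetic difference is that you normalise by $R$ (centring the strip at $\re\zeta=0$ rather than $\re\zeta=\log R$), and you explicitly dispose of the trivial case $\varepsilon\geq 1$, which is a small extra point of care: for $\varepsilon\geq 2$ one has $\rho\geq 1$ and Harnack does not literally apply, though the conclusion is vacuous there anyway.
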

\begin{proof}  
Choose $z_1$ and $z_2$ with 
$|z_1|=|z_2|=R$ such that
\begin{equation}  \label{2j}
u(z_1)=\max_{|z|=R}u(z)
\quad\text{and}\quad
u(z_2)=\min_{|z|=R}u(z).
\end{equation}  
Consider the strip
\begin{equation}  \label{2j1}
S= \left\{\zeta\in\C \colon \log R -\frac{2\pi}{\varepsilon}
<\re \zeta<\log R+\frac{2\pi}{\varepsilon} \right\}
\end{equation}  
and define
\begin{equation}  \label{2j2}
v\colon S\to\R,
\quad v(\zeta)=u(e^\zeta).
\end{equation}  
There exist $\zeta_1$ and $\zeta_2$ with
$\re \zeta_1=\re \zeta_2=\log R$ 
such that 
$z_1=\exp\zeta_1$, $z_2=\exp\zeta_2$
and 
$|\zeta_1-\zeta_2|\leq \pi$.
It now follows from Lemma~\ref{lemma4} with $a=\zeta_1$, $z=\zeta_2$, $r=2\pi/\varepsilon$
and $\rho=\varepsilon/2$ that
\begin{equation}  \label{2j3}
\frac{u(z_2)}{u(z_1)} =
\frac{v(\zeta_2)}{v(\zeta_1)} 
\geq \frac{1-\rho}{1+\rho}>1-2\rho
=1-\varepsilon,
\end{equation}  
from which the conclusion follows by the definition of $z_1$ and $z_2$.
\end{proof}  
\begin{remark}\label{remark5}
Let $f$ be an entire function satisfying~\eqref{1a4};
that is, for large $r$ there exists $\rho\in (r,cr)$ such that
$\log m(\rho,f)\leq (1-\varepsilon)\log M(\rho,f)$.
Applying Lemma~\ref{lemma5} to $u=\log|f|$
we see that $u$ cannot be positive in
the annulus $\ann(e^{-2\pi/\varepsilon}\rho,e^{2\pi/\varepsilon}\rho)$. Thus there 
exists 
\[
s\in (e^{-2\pi/\varepsilon}\rho,e^{2\pi/\varepsilon}\rho)\subset 
(e^{-2\pi/\varepsilon}r,e^{2\pi/\varepsilon}cr)
\]
with $m(s,f)\leq 1$.
Hence~\eqref{1a3} holds with $b=e^{4\pi/\varepsilon}c$.
Clearly~\eqref{1a3} implies~\eqref{1a4} and so
this shows that the conditions~\eqref{1a3} and~\eqref{1a4} are indeed equivalent,
as noted in the introduction.
\end{remark}

Pommerenke~\cite[Theorem~4]{Pommerenke} showed that there exists a constant $C>0$ such that 
if $f$ is a transcendental entire function,
then there exists a sequence $(z_k)$ tending to $\infty$ such that
\begin{equation}  \label{2l1}
|f(z_k)|\leq 1 
\quad\text{and}\quad
|f'(z_k)| \geq 
C\frac{\log M(|z_k|,f)}{|z_k|}.
\end{equation}  
We shall need the additional information that given a sequence $(w_k)$ satisfying 
$|f(w_k)|\leq 1$ for all~$k$, the sequence $(z_k)$ can be chosen such that $|z_k|$ and $|w_k|$ are of 
the same order of magnitude. This could be 
deduced from Pommerenke's method, but for completeness
we include the following lemma whose proof is based on a different method which was 
also used in~\cite{Eremenko2007,Bergweiler2012,Bergweiler2016}.
\begin{lemma}\label{lemma6}
Let $R>0$ and $h\colon \ann(e^{-\pi}R,e^{\pi}R)\to \C$
be holomorphic. Suppose that
\begin{equation}  \label{2k}
m(R,h)\leq 1<M(R,h).
\end{equation}  
Then there exists $z_0\in \ann(e^{-\pi}R,e^{\pi}R)$ such that
\begin{equation}  \label{2l}
|h(z_0)|= 1 
\quad\text{and}\quad
|h'(z_0)| \geq \frac{1}{2\pi}\frac{\log M(e^{-\pi}|z_0|,h)}{|z_0|}.
\end{equation}  
\end{lemma}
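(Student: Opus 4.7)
The plan is to perform a logarithmic change of variables, and then apply Harnack's inequality (Lemma~\ref{lemma4}) at the boundary of a suitable superlevel set of $|H|$, in the style of a quantitative Hopf lemma.

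I would start by setting $\zeta=\log z$ and $H(\zeta)=h(e^\zeta)$, so that $H$ is holomorphic on the strip $S=\{\zeta:|\re\zeta-\log R|<\pi\}$ and $|H'(\zeta)|=|z||h'(z)|$ where $z=e^\zeta$. It then suffices to find $\zeta_0\in S$ with $|H(\zeta_0)|=1$ and $|H'(\zeta_0)|\ge \log M(R,h)/(2\pi)$, because $z_0=e^{\zeta_0}$ automatically satisfies $e^{-\pi}|z_0|\le R$ and hence $M(e^{-\pi}|z_0|,h)\le M(R,h)$. The hypothesis $m(R,h)\le 1<M(R,h)$, together with the $2\pi i$-periodicity of $\log$, lets me pick $\zeta_1,\zeta_2$ on the line $\re\zeta=\log R$ with $|H(\zeta_1)|=M(R,h)=:M$, $|H(\zeta_2)|\le 1$, and $|\zeta_1-\zeta_2|\le \pi$.

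The next step is to introduce the connected component $V$ of $\{\zeta\in S:|H(\zeta)|>1\}$ containing $\zeta_1$ and to let $r_1$ be the distance from $\zeta_1$ to $\partial V$. Since the vertical segment from $\zeta_1$ to $\zeta_2$ lies in $S$ and must leave $V$, one has $r_1\le|\zeta_1-\zeta_2|\le\pi$. The disk $D(\zeta_1,r_1)$ is contained in $V$, and its boundary meets $\partial V$ at some point $\zeta_0$. If $r_1<\pi$, then $\zeta_0$ lies automatically in the interior of $S$; in the borderline case $r_1=\pi$, the open segment from $\zeta_1$ to $\zeta_2$ lies inside $D(\zeta_1,\pi)\subset V$, so continuity forces $|H(\zeta_2)|=1$ and hence $\zeta_2\in\partial V$, and one may take $\zeta_0=\zeta_2$, which again lies in the interior of $S$. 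Either way $|H(\zeta_0)|=1$.

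The final step is analytic. Since $|H|>1$ throughout $V$, the function $u=\log|H|$ is positive and harmonic on $D(\zeta_1,r_1)$, with $u(\zeta_1)=\log M$ and $u(\zeta_0)=0$. Applying Lemma~\ref{lemma4} to $u$ at the points $x_t$ on the segment from $\zeta_1$ to $\zeta_0$ at distance $t$ from $\zeta_0$ gives $u(x_t)\ge t\log M/(2r_1-t)$; dividing by $t$ and letting $t\to 0^+$ bounds the inward normal derivative of $u$ at $\zeta_0$ from below by $\log M/(2r_1)\ge\log M/(2\pi)$. Because $H$ is holomorphic and non-vanishing near $\zeta_0$, one has $|\nabla u|=|H'/H|$ there, and $|H(\zeta_0)|=1$ then yields $|H'(\zeta_0)|\ge\log M/(2\pi)$, as required. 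The main obstacle is the geometric step of verifying that the tangent point $\zeta_0$ can be chosen in the open strip rather than on $\partial S$; once that is settled, the analytic estimate is a direct quantitative consequence of Harnack's inequality.
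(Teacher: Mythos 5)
Your proof is correct and follows essentially the same route as the paper's: logarithmic change of variables to a strip, find a tangent point $\zeta_0$ of a disk around $\zeta_1$ to the level set $\{|H|=1\}$, then apply Harnack's inequality on that disk to bound the radial derivative of $\log|H|$ at $\zeta_0$ from below. The only cosmetic differences are that you work with the connected component of the superlevel set rather than the whole set (which makes no difference, since the maximal disk is the same), and you spell out the borderline case $r_1=\pi$ — a detail the paper leaves implicit in the assertion that $\zeta_0$ can be taken in $S$.
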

\begin{proof}
We proceed similarly as in the proof of Lemma~\ref{lemma5} and choose
$z_1$ and $z_2$ with $|z_1|=|z_2|=R$ such that
$|h(z_1)|=M(R,h)$ and $|h(z_2)|=m(R,h)$.
With
$S= \left\{\zeta\in\C \colon \log R -\pi <\re \zeta<\log R+\pi \right\}$
we define
$f\colon S\to \C$, $f(\zeta)=h(e^\zeta)$.
There exists $\zeta_1$ and $\zeta_2$ with $\re \zeta_1=\re \zeta_2=\log R$
such that
$z_1=\exp\zeta_1$, $z_2=\exp\zeta_2$ and
$|\zeta_1-\zeta_2|\leq \pi$.

Now let $G=\{\zeta\in S \colon |f(\zeta)|>1\}$  and $r=\max\{t\colon D(\zeta_1,t)\subset G\}$.
Then $0<r\leq |\zeta_1-\zeta_2|\leq \pi$ and
there exists $\zeta_0\in \partial G\cap\partial D(\zeta_1,r)\cap S$ with
$|f(\zeta_0)|=1$.
For $0<s<1$ we put $\zeta_s=(1-s)\zeta_0+s\zeta_1$ so that
$|\zeta_s-\zeta_0|=s|\zeta_1-\zeta_0|=sr$.
By the definition of $r$, the function $v\colon D(\zeta_1,r)\to\R$, $v(\zeta)=\log|f(\zeta)|$,
is a positive harmonic function.
Lemma~\ref{lemma4} 
(that is, Harnack's inequality),
applied with $a=\zeta_1$, $z=\zeta_s$ and $\rho=|\zeta_s-\zeta_1|/r=1-s$,
now yields that
\begin{equation}  \label{2h1}
\frac{v(\zeta_s)}{v(\zeta_1)} \geq  \frac{1-\rho}{1+\rho}> \frac{1-\rho}{2}=\frac{s}2
\end{equation}  
and hence
\begin{equation}  \label{2h2}
\frac{v(\zeta_s)-v(\zeta_0)}{|\zeta_s-\zeta_0|} 
=\frac{v(\zeta_s)}{s r}
>  \frac{v(\zeta_1)}{2r}
\geq  \frac{v(\zeta_1)}{2\pi}
=\frac1{2\pi}\log |f(\zeta_1)| 
=\frac1{2\pi}\log M(R,h).
\end{equation}
Thus
\begin{equation}  \label{2h3}
|f'(\zeta_0)|=\frac{|f'(\zeta_0)|}{|f(\zeta_0)|}=|\nabla v(\zeta_0)|
\geq \lim_{s\to 0} \frac{v(\zeta_s)-v(\zeta_0)}{|\zeta_s-\zeta_0|}
\geq \frac1{2\pi}\log M(R,h).
\end{equation}
With $z_0=\exp\zeta_0$ we thus have $|h(z_0)|=|f(\zeta_0)|= 1$ and
\begin{equation}  \label{2o}
|h'(z_0)|=\frac{|f'(\zeta_0)|}{|z_0|}
\geq \frac{1}{2\pi}\frac{\log M(R,h)}{|z_0|},
\end{equation}
which yields the conclusion since $|z_0|\leq e^{\pi}R$.
\end{proof}

The following lemma follows easily from Rouch\'e's theorem and
Schwarz's lemma; see \cite[Lemma 2.3]{Bergweiler2005}.
\begin{lemma} \label{lemma8}
Let $0<\delta<\varepsilon/2$ and let
$U\subset D(a,\delta)$ be a simply-connected domain.
Let $f\colon U\to D(a,\varepsilon)$  be holomorphic and bijective.
Then $f$ has a fixed point $\xi$ in $U$ which satisfies
$|f'(\xi)|\geq\varepsilon/4\delta$.
\end{lemma}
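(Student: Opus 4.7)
The hypothesis that $f\colon U\to D(a,\varepsilon)$ is holomorphic and bijective lets me work with the inverse $g = f^{-1}\colon D(a,\varepsilon)\to U$, which is holomorphic and takes values in the small disk $D(a,\delta)$. Any fixed point of $g$ in $D(a,\varepsilon)$ automatically lies in $U$ (since $\xi=g(\xi)\in U$) and is also a fixed point of $f$, with $|f'(\xi)|=1/|g'(\xi)|$. So the plan reduces to producing a fixed point of $g$ and giving a Schwarz-type bound on $|g'(\xi)|$.

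For existence I would apply Rouch\'e's theorem on the circle $\partial D(a,\delta)$, which lies in the domain of $g$ because $\delta<\varepsilon$. On that circle
\[
|g(z)-a|<\delta = |z-a|,
\]
so the two holomorphic functions $z-a$ and $(z-a)-(g(z)-a)=z-g(z)$ have the same number of zeros inside $D(a,\delta)$, namely one. This zero is the desired fixed point $\xi\in D(a,\delta)\cap U$.

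For the derivative bound I would apply Schwarz's lemma on a disk centered at $\xi$ rather than at $a$, in order to use that $g(\xi)=\xi$. Set $r=\varepsilon-|\xi-a|$; since $|\xi-a|<\delta<\varepsilon/2$, we have $r>\varepsilon/2$, and $D(\xi,r)\subset D(a,\varepsilon)$. Moreover
\[
g(D(\xi,r))\subset D(a,\delta)\subset D(\xi,|\xi-a|+\delta)\subset D(\xi,2\delta).
\]
The rescaled function $G(w)=(g(\xi+rw)-\xi)/(2\delta)$ therefore maps the unit disk into itself and satisfies $G(0)=0$. Schwarz's lemma gives $|G'(0)|\leq 1$, i.e.\ $|g'(\xi)|\leq 2\delta/r<4\delta/\varepsilon$, so $|f'(\xi)|\geq \varepsilon/(4\delta)$.

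There is no genuine obstacle here; both ingredients are textbook, and the arithmetic condition $\delta<\varepsilon/2$ is precisely what makes the Schwarz step produce the constant $1/4$. The only point requiring attention is centering the Schwarz disk at $\xi$ (rather than at $a$), which is what allows the numerator to be $2\delta$ and yields the stated constant independently of $|\xi-a|$.
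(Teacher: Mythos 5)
Your proof is correct and uses exactly the two ingredients the paper attributes the result to — Rouch\'e's theorem for the existence of the fixed point and Schwarz's lemma for the derivative bound — so it is essentially the same approach as the cited Lemma~2.3 of \cite{Bergweiler2005}. Passing to the inverse $g=f^{-1}$, applying Rouch\'e on $\partial D(a,\delta)$, and then centering Schwarz at the fixed point $\xi$ to get $|g'(\xi)|\le 2\delta/r < 4\delta/\varepsilon$ is the standard argument, and all the estimates ($r=\varepsilon-|\xi-a|>\varepsilon/2$ and $D(a,\delta)\subset D(\xi,2\delta)$) are correct.
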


To achieve a situation where Lemma~\ref{lemma8} can be applied,
we use Ahlfors' theory of covering surfaces;
 see~\cite{Ahlfors1935}, \cite[Chapter~5]{Hayman1964} or \cite[Chapter~XIII]{Nevanlinna1953} 
for an account of Ahlfors' theory.
A key result of Ahlfors' theory is the following; see also~\cite{Bergweiler1998} for
another proof of this result.
Here $f^\#:=|f'|/(1+|f|^2)$ denotes the spherical derivative.
\begin{lemma} \label{lemma9}
Let $D_1$, $D_2$ and $D_3$ be three Jordan domains in $\C$ with pairwise disjoint closures. 
Then there exists a positive constant $A$ depending only on these domains such that if $a\in\C$,
$\rho>0$ and $f\colon D(a,\rho)\to\C$ is a holomorphic function with the property that 
no subdomain of $D(a,\rho)$ is mapped conformally onto one of the domains $D_j$, 
then
$\rho f^{\#}(a)\leq A$.
\end{lemma}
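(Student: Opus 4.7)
The plan is to proceed by contradiction via a normal families argument: assume the lemma fails, apply Zalcman's rescaling lemma to extract a non-constant meromorphic limit with bounded spherical derivative, and then derive a contradiction from Ahlfors' theory of covering surfaces. Concretely, suppose there exist sequences $a_n \in \C$, $\rho_n > 0$, and holomorphic $f_n \colon D(a_n,\rho_n) \to \C$ such that no subdomain of $D(a_n, \rho_n)$ is mapped conformally onto any $D_j$, while $\rho_n f_n^{\#}(a_n) \to \infty$. Normalize via $g_n(w) := f_n(a_n + \rho_n w)$ on $D(0,1)$, so that $g_n^{\#}(0) = \rho_n f_n^{\#}(a_n) \to \infty$ and, by Marty's theorem, $(g_n)$ fails to be normal at $0$. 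Each $g_n$ still has the non-island property over each $D_j$, because that property is preserved under affine precomposition.

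Next, apply Zalcman's rescaling lemma to obtain $w_n \to 0$, $r_n \to 0^+$, and a non-constant meromorphic function $h \colon \C \to \hat{\C}$ with $h^{\#}(z) \leq h^{\#}(0) = 1$ for all $z \in \C$, such that
\[
h_n(z) := g_n(w_n + r_n z)
\]
converges to $h$ locally uniformly in the spherical metric. Each $h_n$ inherits the non-island property over every $D_j$. The next step is to transfer this property to the limit~$h$: if some bounded simply connected $U \subset \C$ were mapped conformally by $h$ onto some $D_j$, then one chooses a Jordan curve $\gamma$ enclosing $\overline{U}$, avoiding the poles of~$h$, and satisfying $h(\gamma) \subset \C \setminus \overline{D_j}$; this is possible because $h|_{\overline{U}}$ is a homeomorphism onto $\overline{D_j}$. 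For large~$n$, uniform convergence of $h_n$ to $h$ on $\gamma$ gives $h_n(\gamma) \subset \C \setminus \overline{D_j}$, and the argument principle applied to $h_n - c$ for each $c \in D_j$ forces $h_n$ to map the unique component of $h_n^{-1}(D_j)$ enclosed by $\gamma$ conformally onto $D_j$. This contradicts the non-island property for $h_n$, hence for $f_n$.

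We are left with a non-constant meromorphic function $h \colon \C \to \hat{\C}$ of bounded spherical derivative admitting no island over any of the three Jordan domains $D_1, D_2, D_3$ with pairwise disjoint closures in~$\C$. This contradicts Ahlfors' theory of covering surfaces: Ahlfors' second fundamental inequality, applied to the restrictions $h|_{D(0,R)}$ as $R \to \infty$, bounds the spherical area by a fixed multiple of the boundary length whenever the island counts $n(R, D_j)$ all vanish, and the resulting estimate is incompatible with the non-trivial growth of the spherical area forced by the non-constancy of~$h$.

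The main obstacle I foresee is the transfer of the non-island property from $h_n$ to $h$: one must construct the curve $\gamma$ carefully so that $h(\gamma)$ lies strictly outside $\overline{D_j}$ and the perturbed map $h_n$ has exactly covering degree $1$ over $D_j$ inside $\gamma$. Both are standard consequences of Hurwitz's theorem and the argument principle, but the possible presence of poles of $h$ close to $\overline{U}$, and the fact that $h^{-1}(D_j)$ may have several components abutting $U$, require some attention when selecting~$\gamma$ in the spherical topology.
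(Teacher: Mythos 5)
The paper does not prove this lemma; it quotes it as a standard consequence of Ahlfors' theory of covering surfaces and refers to \cite{Bergweiler1998} for an alternative proof. Your Zalcman-rescaling strategy is essentially that alternative argument, so the route is legitimate, and the Ahlfors endgame is correct in outline. One simplification you miss: the $h_n$ are holomorphic, and a nonconstant locally uniform spherical limit of holomorphic functions is again holomorphic, so $h$ is entire and there are no poles to handle.

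The transfer of the non-island property from $h_n$ to $h$ contains a genuine gap, and the issue you flag is not one that a ``careful selection of $\gamma$'' can resolve. You want a Jordan curve $\gamma$ enclosing $\overline U$ with $h(\gamma)\cap\overline{D_j}=\emptyset$, and you justify its existence by the fact that $h|_{\overline U}$ is a homeomorphism onto $\overline{D_j}$; but that controls $h$ only on $\overline U$, not just outside it. If $h'$ vanishes on $\partial U$ --- take $h(z)=z^2$ and $D_j=\{\,|w-1|<1\,\}$, so that $0\in\partial D_j$, $h(0)=0$ and $h'(0)=0$ --- then a second component of $h^{-1}(D_j)$ abuts $\overline U$ at $0$, and \emph{every} Jordan curve enclosing $\overline U$ meets $h^{-1}(\overline{D_j})$: no admissible $\gamma$ exists. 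The standard repair is to \emph{enlarge} rather than shrink the domains before passing to the limit. Fix Jordan domains $D_j^+\supset\overline{D_j}$ still with pairwise disjoint closures, and show that $h$ has no island over any $D_j^+$. Indeed, if $h\colon U\to D_j^+$ were a conformal bijection, set $U^-=(h|_U)^{-1}(D_j)$, so that $\overline{U^-}\subset U$, and take a Jordan curve $\gamma\subset U\setminus\overline{U^-}$ winding once around $U^-$; then $h(\gamma)$ is a compact subset of $D_j^+\setminus\overline{D_j}$, hence at positive Euclidean distance from $\overline{D_j}$. Uniform convergence on $\gamma$ gives $h_n(\gamma)\cap\overline{D_j}=\emptyset$ for large $n$, and the argument principle (winding number one about each $c\in D_j$) yields a subdomain inside $\gamma$ which $h_n$ maps conformally onto $D_j$, contradicting the hypothesis on the $f_n$. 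Ahlfors' second fundamental theorem, applied to the nonconstant entire function $h$ and the three domains $D_j^+$, then gives the contradiction, as you indicate.
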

We mention that the result also holds for meromorphic functions if we take five Jordan
domains $D_j$ instead of three domains. This result is known as the Ahlfors five islands
theorem.

\section{Proof of Theorems \ref{thm1} and \ref{thm2}}

\begin{proof}[Proof of Theorem \ref{thm2}]
The idea of the proof is as follows.
Choose $z_0$ with $|z_0|=r$ and $|g(z_0)|= M(r,g)$ as in Lemmas~\ref{lemma1}--\ref{lemma3}.
Let $F$ be the exceptional set arising in these lemmas and suppose that
$r\not\in F$ is sufficiently large.
We consider a small disk $D(u_0,\rho)$ near $z_0$.
Lemma~\ref{lemma3} says that $g$ and hence $f\circ g$ map 
$D(u_0,\rho)$ and its images under the maps $z\mapsto \phi_j(z):=ze^{\tau_j(z)}$ 
in the same way.
We will choose $D(u_0,\rho)$ so that the images $\phi_j(D(u_0,\rho))$
are contained in pairwise disjoint disks $\Delta_j$.
Lemma~\ref{lemma9} will then be used to obtain $j\in\{1,2,3\}$ and a subdomain $U$ of $D(u_0,\rho)$
which is mapped conformally onto $\Delta_j$ by $f\circ g$. 
Since $\phi_j(U)\subset \Delta_j$ and $f\circ g\colon \phi_j(U)\to \Delta_j$ is conformal,
we can then apply Lemma~\ref{lemma8} to obtain a repelling fixed point of $f\circ g$.
For this we will actually replace $\Delta_j$ by a subdisk centered at $u_j=\phi_j(u_0)$
and replace $\phi_j(U)$ by the corresponding subdomain~$W$.

In order to apply Lemma~\ref{lemma9} we have to choose a point $u_0$ where the spherical derivative
is large.
This will be done using Lemma~\ref{lemma6}.
However, since Lemma~\ref{lemma9} asks for fixed disks $D_j$ while the $\Delta_j$
vary with $r$, we have to consider suitable rescalings of $f$ and $f\circ g$ and apply 
Lemmas~\ref{lemma6} and~\ref{lemma9} to those.

We thus define 
\begin{equation}  \label{3a}
H(w)= \frac{\nu (r,g)}{z_0}(f(w)-z_0)
\end{equation} 
and $R=M(r,g)$.
By our hypothesis~\eqref{1a3} there exists 
$w_1\in \ann(R,bR)$
satisfying $|f(w_1)|\leq a$.
It follows that 
\begin{equation}  \label{3c}
|H(w_1)|\leq \frac{\nu(r, g)}{r}(a+r) < 2\nu(r,g)
\end{equation} 
for large~$r$.
From~\eqref{2d} and~\eqref{2c} we can easily deduce that
\begin{equation}  \label{3d}
\log \nu(r,g)=o(\log M(r,g))
\end{equation} 
as $r\to\infty$, $r\notin F$.
Noting that $|w_1|> R=M(r,g)$ 
we conclude from~\eqref{3c} and~\eqref{3d} that 
\begin{equation}  \label{3e}
\log |H(w_1)|= o(\log M(r,g))= o(\log|w_1|)
\end{equation} 
as $r\to\infty$, $r\notin F$.
We choose $v_1$ with $|v_1|= |w_1|$ and $|f(v_1)|= M(|w_1|, f)$.
Noting that $M(|w_1|,f)\geq |w_1|\geq M(r,g)\geq 2r$ for large $r$, we find that
\begin{equation}  \label{3f}
|H(v_1)|= \frac{\nu (r, g)}{r}|f(v_1) - z_0| 
\geq  \frac{\nu (r, g)}{r}(M(|w_1|,f)-r) \geq 
\frac{\nu(r,g)}{2r}M(|w_1|, f).
\end{equation} 
Together with~\eqref{3d} this implies that 
\begin{equation}  \label{3g}
\log |H(v_1)|\geq (1-o(1)) \log M(|w_1|,f)
\end{equation} 
as $r\to\infty$, $r\notin F$.
Since $f$ is transcendental, we have 
\begin{equation}  \label{3h}
\frac{\log M(t,f)}{\log t}\to\infty
\end{equation} 
as $t\to\infty$.
Thus~\eqref{3e} and~\eqref{3g} yield that $\log|H(v_1)|>2\log|H(w_1)|$ for 
large $r\notin F$.
Applying Lemma~\ref{lemma5} with $\varepsilon=1/2$ we deduce
that $\log|H|$ cannot be a positive harmonic function in the annulus 
$\ann(e^{-4\pi}|w_1| ,e^{4\pi}|w_1|)$.
Hence $m(s,H)\leq 1$ for some $s\in (e^{-4\pi}|w_1| ,e^{4\pi}|w_1|)
\subset (e^{-4\pi}R,e^{4\pi}bR)$.

Lemma~\ref{lemma6} implies that there exists 
\begin{equation}  \label{3j}
w_0\in \ann(e^{-\pi}s,e^{\pi}s)\subset \ann(e^{-5\pi}R,e^{5\pi}bR)
\end{equation} 
such that
\begin{equation}  \label{3k}
|H(w_0)|= 1 
\quad\text{and}\quad
|H'(w_0)| \geq \frac{1}{2\pi}\frac{\log M(e^{-\pi}|w_0|,H)}{|w_0|}.
\end{equation} 
Using~\eqref{3d} we easily see that 
\begin{equation}  \label{3l}
\log M(e^{-\pi}|w_0|,H)\sim  \log M(e^{-\pi}|w_0|,f)
\end{equation} 
so that~\eqref{3k} yields
\begin{equation}  \label{3m}
\frac{\nu(r,g)}{r}|f'(w_0)|
=|H'(w_0)| \geq (1-o(1))\frac{\log M(e^{-\pi}|w_0|,f)}{2\pi|w_0|}.
\end{equation} 

By~\eqref{3j} we have
$w_0 = e^{\sigma}g(z_0)$ for some $\sigma$ satisfying $|\re\sigma|\leq  5\pi+\log b$
and $|\im \sigma|\leq \pi$.
Lemma~\ref{lemma2} implies that for large $r\notin F$ there exists $\tau$ satisfying
\begin{equation}  \label{3n}
|\tau|\leq \frac{6\pi +\log b}{\nu(r,g)}
\end{equation} 
and $g(z_0 e^\tau)=w_0$.

Put $u_0=z_0 e^\tau$ so that $g(u_0)=w_0$.
Note that by~\eqref{3n} we have $\tau\to 0$ and thus
\begin{equation}  \label{3n1}
u_0\sim z_0\quad\text{and}\quad |u_0|\sim r
\end{equation} 
as $r\to\infty$.

We will apply Lemma~\ref{lemma9} to the function $G\colon D(u_0,\rho)\to \C$,
\begin{equation}  \label{3o}
G(z)= \frac{\nu(r, g)}{u_0}(f(g(z))- u_0),
\end{equation} 
and the domains  $D_j= D (2\pi ij, 2)$ for $j\in\{1,2,3\}$,
with a value of $\rho$ still to be determined.

In order to do so, we have to estimate 
\begin{equation}  \label{3p}
\begin{aligned}
G^{\#}(u_0)
&=\frac{|G'(u_0)|}{1+|G(u_0)|^2}
\\ &
=\frac{\nu(r,g)}{|u_0|}\frac{|f'(w_0)|\cdot |g'(u_0)|}{1+|G(u_0)|^2}
\\ &
=(1+o(1))\frac{\nu(r,g)}{r}|f'(w_0)|\cdot |g'(u_0)| \cdot \frac{1}{1+|G(u_0)|^2}.
\end{aligned}
\end{equation} 
The first factor on the right hand side has already been estimated in~\eqref{3m}.
The second one can be estimated by~\eqref{2b} 
which yields
\begin{equation}  \label{3q}
|g'(u_0)|\sim \left| \frac{\nu(r,g)}{u_0}g(u_0) \right|\sim
\frac{\nu(r,g)}{r}|g(u_0)|= \frac{\nu(r,g)}{r}|w_0|.
\end{equation} 
In order to estimate the third factor on the right hand side of~\eqref{3p} we note
that
\begin{equation}  \label{3r}
|u_0-z_0|= r|e^\tau-1|=(1+o(1))r|\tau|\leq (1+o(1))\frac{(6\pi+\log b)r}{\nu(r, g)}
\end{equation} 
as $r\to\infty$, $r\notin F$, by~\eqref{3n}.
Since
\begin{equation}  \label{3s}
G(z)= \frac{z_0}{u_0} H(g(z))- \frac{\nu(r, g)}{u_0}(u_0-z_0)
\end{equation} 
we deduce from~\eqref{3k}, \eqref{3n1} and~\eqref{3r}
that
\begin{equation}  \label{3t}
|G(u_0)|=\left|\frac{z_0}{u_0}H(w_0)- \frac{\nu(r, g)}{u_0}(u_0-z_0)\right| 
\leq 1+6\pi+\log b+o(1).
\end{equation} 
Combining~\eqref{3p} with \eqref{3m}, \eqref{3q} and \eqref{3t} we find that
\begin{equation}  \label{3u}
G^{\#}(u_0)\geq (1-o(1)) 
\frac{\log M(e^{-\pi}|w_0|,f)}{2\pi |w_0|}
\cdot 
\frac{\nu(r,g)|w_0|}{r} 
\cdot
\frac{1}{ 1+(1+6\pi+\log b)^2 }.
\end{equation} 
Choosing $\eta$ with $0<\eta<1/(2\pi(1+(1+6\pi+\log b)^2))$ and putting $\gamma=e^{-\pi}$ we thus have
\begin{equation}  \label{3v}
G^{\#}(u_0)>  \eta  
\frac{\nu(r, g)}{r}\log M(\gamma|w_0|, f)
\end{equation} 
for large $r\notin F$.

We now put
\begin{equation}  \label{3w}
\rho = \frac{Ar}{\eta\,  \nu(r, g) \log M(\gamma|w_0|, f)},
\end{equation} 
and deduce from Lemma~\ref{lemma9}
that there exist $j\in \{1, 2, 3\}$ and a domain $U$ contained in $D(u_0, \rho)$
such that $G$ maps $U$ conformally onto $D_j$.
It follows that $f\circ g$ maps $U$ conformally onto 
\begin{equation}  \label{3w1}
\Delta_j:=
D\!\left(u_0\left(1+\frac{2\pi ij}{\nu(r,g)}\right),\frac{2|u_0|}{\nu(r,g)}\right)
\end{equation} 

We choose $\tau_j$ according to Lemma~\ref{lemma3},
for some fixed large constant~$K$, and define 
$\phi=\phi_j$ as mentioned above; that is,
\begin{equation}  \label{3x}
\phi\colon D\!\left(u_0,\frac{Kr}{\nu(r,g)}\right)\to\C,\quad\phi(z)=ze^{\tau_j(z)}.
\end{equation} 
Put $u_j=\phi(u_0)$.
Then 
\begin{equation}  \label{3y}
u_j
=u_0e^{\tau_j(u_0)}
=u_0\exp\!\left( \frac{2\pi ij+o(1)}{\nu(r,g)}\right) 
=u_0\left(1+ \frac{2\pi ij+o(1)}{\nu(r,g)}\right)
\end{equation} 
by~\eqref{2e}.
Thus 
\begin{equation}  \label{3z}
\Delta_j
\supset D\!\left(u_j,\frac{r}{\nu(r,g)}\right)
\end{equation} 
for large~$r$.
It follows that $U$ contains a subdomain $V$ which is mapped 
conformally onto $D(u_j,r/\nu(r,g))$ by $f\circ g$.

By~\eqref{2f} we have $g(z)=g(\phi(z))$ and~\eqref{2g} implies that
$\phi$ is univalent.
Putting $W=\phi(V)$ we see that $f\circ g$ maps $W$ conformally onto $D(u_j,r/\nu(r,g))$.
Moreover, we deduce from~\eqref{2g} that
\begin{equation}  \label{4a}
W=\phi(V)\subset \phi(U)\subset \phi(D(u_0,\rho))\subset  D(u_j,2\rho) 
\end{equation} 
for large~$r$.
We can now apply Lemma~\ref{lemma8} with $\delta=2\rho$ and $\varepsilon=r/\nu(r,g)$.
Note that the hypothesis $\delta<\varepsilon/2$ is clearly satisfied for large $r$
since~\eqref{3w} implies that
\begin{equation}  \label{4b}
\frac{\varepsilon}{\delta} =\frac{\eta }{2A}\log M(\gamma|w_0|,f)\to\infty
\end{equation} 
as $r\to\infty$.
Lemma~\ref{lemma8} now yields that $W$ contains a fixed point $\xi$ of $f\circ g$ satisfying
\begin{equation}  \label{4c}
|(f\circ g)'(\xi)|\geq \frac{\varepsilon}{4\delta} =\frac{\eta }{8A}\log M(\gamma|w_0|,f),
\end{equation} 
provided $r$ is large enough.
It follows from \eqref{3n}, \eqref{3w}, \eqref{3y} and~\eqref{4a}
that
\begin{equation}  \label{4d}
\begin{aligned} 
\xi
&=u_j\exp\!\left( \frac{o(1)}{\nu(r,g)}\right)   \\
&=u_0\exp\!\left( \frac{2\pi ij+o(1)}{\nu(r,g)}\right)   \\
&=z_0\exp\!\left(\tau +\frac{2\pi ij+o(1)}{\nu(r,g)}\right).
\end{aligned} 
\end{equation} 
Thus we deduce from~\eqref{2a} 
and~\eqref{2a1} that
\begin{equation}  \label{4e}
M(|\xi|,g)\sim |g(\xi)|
\sim \left| e^{\nu(r,g)\tau+2\pi ij}g(z_0)\right|=e^{\nu(r,g)\re\tau}|w_0|
\end{equation} 
and thus, by~\eqref{3n},
\begin{equation}  \label{4e1}
M(|\xi|,g) \leq (1+o(1))e^{6\pi +\log b} |w_0| .
\end{equation} 
Choosing $\alpha=\eta /(8A)$ and $\beta$ with $0<\beta<e^{-6\pi -\log b}\gamma=e^{-7\pi -\log b}$ we 
can now deduce from~\eqref{4c} and~\eqref{4e1} that 
\begin{equation}  \label{4f}
|(f\circ g)'(\xi)|\geq \alpha \log M(\beta M(|\xi|,g),f),
\end{equation} 
provided $r$ is large enough.
Since $|\xi|$ can be chosen arbitrarily large by taking $r$ large, the conclusion follows.
\end{proof}
\begin{proof}[Proof of Theorem \ref{thm1}]
We apply Theorem~\ref{thm2} with $g=f^{n-1}$
and obtain a sequence $(\xi_k)$ of fixed points of $f\circ g=f^n$ satisfying~\eqref{1d}
and~\eqref{1e}; that is,
\begin{equation}  \label{1d1}
|(f^n)'(\xi_k)|\geq \alpha \log M(\beta M(|\xi_k|,f^{n-1}),f)
\end{equation}
and
\begin{equation}  \label{1e1}
|f^{n-1}(\xi_k)| \geq (1-o(1))M(|\xi_k|, f^{n-1}).
\end{equation}

It follows easily from~\eqref{1e1} that if $1\leq m\leq n-1$ and $k$ is sufficiently 
large, then $\xi_k$ is not a fixed point of $f^m$.
Thus $\xi_k$ is a periodic point of period $n$ for large~$k$.

Since $f$ and hence $f^{n-1}$ are transcendental, it follows that if $K>1$, then
\begin{equation}  \label{4g}
\frac{M(Kr,f^{n-1})}{M(r,f^{n-1})}\to\infty
\end{equation}
as $r\to\infty$. This implies that 
$\beta M(r,f^{n-1})=M((1-o(1))r,f^{n-1})$
as $r\to\infty$.
Together with the maximum principle this yields that 
\begin{equation}  \label{4i}
M(\beta M(r,f^{n-1}),f)= M(M((1-o(1))r,f^{n-1}),f)
\geq M((1-o(1))r,f^{n}).
\end{equation}
Combining this with~\eqref{1d1} we obtain the conclusion.
\end{proof}

\section{Remarks and open questions}
Let $f$ be a transcendental entire function that does not satisfy the condition~\eqref{1a3}
imposed in our results.
Using that this condition is equivalent to~\eqref{1a4} we can deduce that there exists
a sequence $(R_k)$ tending to $\infty$ and a sequence $(G_k)$ of Jordan domains
satisfying $0\in G_k\subset D(0,R_k)$ such that 
$f\colon G_k\to D(0,R_k)$ is a proper map for all~$k$. In the terminology of 
Douady and Hubbard~\cite{Douady1985}, the map $f\colon G_k\to D(0,R_k)$ is a 
polynomial-like map.
As indicated by the name, such functions behave like polynomials in some sense, and
this was the underlying idea in the proof in~\cite{Bergweiler1991}
that $f^n$ has infinitely many repelling periodic points of any period $n\geq 2$
in this case, and that $f\circ g$ has infinitely many repelling fixed points
for any transcendental entire function~$g$. 

While not phrased this way in~\cite{Bergweiler1991}, the essential tool from
complex dynamics that was used is the following proposition.
Here we denote by $\deg(p)$ the degree and by
$\cv(p)$ the number of critical values of a polynomial~$p$; 
that is, the cardinality of the set $p((p')^{-1}(0))$.
\begin{proposition}\label{prop1}
Let $p$ be a polynomial satisfying $\deg(p)> 2 \cv(p)$. 
Then $p$ has a repelling fixed point.
\end{proposition}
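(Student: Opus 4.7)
The plan is to argue by contradiction using a critical-value refinement of Fatou's theorem on non-repelling cycles. Suppose $p$ has no repelling fixed point; since $p-\operatorname{id}$ has degree $d:=\deg p$, there are $d$ fixed points of $p$ counted with multiplicity, each satisfying $|p'(\xi)|\le 1$. I would partition these into the \emph{simple} fixed points ($p'(\xi)\ne 1$, i.e.\ simple zeros of $p-\operatorname{id}$) and the \emph{parabolic} ones ($p'(\xi)=1$, i.e.\ zeros of multiplicity $m\ge 2$); write $s$ for the number of simple non-repelling fixed points and $m_1,\dots,m_t$ for the multiplicities of the parabolic ones, so that $s+\sum_i m_i=d$.

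The key ingredient would be a critical-value form of Fatou's theorem: to each simple non-repelling fixed point one attaches at least one critical orbit of $p$, and to each of the $m_i-1$ Leau--Fatou attracting petals of a parabolic fixed point of multiplicity $m_i$ one attaches a critical orbit in the immediate parabolic basin. The critical orbits attached to distinct fixed points or to distinct petals must have \emph{distinct critical values}: if two critical points $c_1\ne c_2$ satisfied $p(c_1)=p(c_2)$, then $p^n(c_1)=p^n(c_2)$ for all $n\ge 1$, so both orbits would accumulate on the same periodic set, contradicting the disjointness of the relevant basins or petals. Hence
\[
s+\sum_{i=1}^{t}(m_i-1)\;\le\;c:=\cv(p).
\]

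Combining this with $s+\sum_i m_i=d$ yields $d-t\le c$, so $t\ge d-c$; since $m_i\ge 2$ gives $\sum_i m_i\ge 2t\ge 2(d-c)$, one obtains $s\le d-2(d-c)=2c-d<0$, which is absurd. Thus $p$ must have a repelling fixed point. The main obstacle I anticipate is the rigorous justification of the critical-value form of Fatou's theorem in the presence of irrationally indifferent (Siegel or Cremer) fixed points, where the attached critical orbit is furnished only by more delicate results such as Ma\~{n}\'{e}'s theorem or a quasiconformal perturbation in the spirit of Shishikura; for attracting and parabolic fixed points the forward-invariance of basins and petals makes the distinctness of critical values immediate, and the remaining cases can in principle be reduced to this by a small perturbation of $p$ preserving $\cv(p)$.
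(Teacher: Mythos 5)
Your proof follows essentially the same strategy as the paper's: the central input is Fatou's theorem that each attracting basin and each parabolic petal absorbs a critical orbit, and you convert critical \emph{points} into critical \emph{values} exactly as the paper does, by using forward-invariance of basins and petals together with the observation that coinciding critical values force coinciding orbit tails. Your bound $s+\sum_i(m_i-1)\le\cv(p)$ and the ensuing arithmetic are a slightly tidier organisation of the paper's ``at most $2\cv(p)$ fixed points with $|\lambda|<1$ or $\lambda=1$, counted with multiplicity,'' but the two counts are equivalent.

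The obstacle you flag is the real one, and both of your proposed escape routes need repair. Ma\~n\'e's theorem does attach a recurrent critical orbit to a Cremer point or Siegel boundary, but it does not by itself give the \emph{distinctness} your inequality needs: the closure of a recurrent critical orbit is not contained in any forward-invariant Fatou component, so nothing rules out a single critical orbit accumulating on several irrationally indifferent cycles at once, and the ``disjoint basins/petals'' reasoning that makes your distinctness claim immediate in the attracting and parabolic cases is unavailable here. Assigning distinct critical points to distinct Cremer and Siegel cycles is precisely the content of Shishikura's theorem, which the paper explicitly wants to avoid. Your alternative --- a small perturbation of $p$ ``preserving $\cv(p)$'' --- is not available either: within the space of polynomials of degree $\deg(p)$, a generic small perturbation strictly \emph{increases} $\cv(p)$, and keeping the critical values coalesced while rotating a chosen indifferent multiplier into the unit disk is an over-determined interpolation problem in that degree. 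The paper's actual device, following Douady, is to interpolate by a polynomial $q$ of \emph{higher} degree chosen so that the perturbation is negligible outside small disks around the indifferent points, and then to restrict $q$ to a suitable domain on which it becomes a polynomial-like map of degree $\deg(p)$ with the same critical-value count; the Fatou count is then applied to this polynomial-like map. That higher-degree, polynomial-like perturbation is the step you should supply in place of the one you sketched, and it is also what makes the proposition extend to polynomial-like maps, a fact the paper uses elsewhere.
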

\begin{proof}[Sketch of proof]
Let $\xi$ be a fixed point of $p$ and let $\lambda=p'(\xi)$ be its multiplier.
If $\xi$ is attracting, that is, if $|\lambda|<1$, then -- 
by a standard result~\cite[\S 8]{Milnor1999} in complex dynamics --
the attracting basin $\{z\colon p^n(z)\to\xi\}$ contains a critical value of~$p$.
If $\lambda=1$, then $\xi$ is a zero of $z\mapsto p(z)-z$ of multiplicity $m\geq 2$.
Again by classical results~\cite[\S 10]{Milnor1999} in complex dynamics,
there are $m-1$ invariant domains where the iterates of $p$ tend to $\xi$, 
and each such domain contains a critical value of~$p$.

It follows from these considerations that, counting multiplicities,
there are at most $2\cv(p)$ fixed
points of $p$ for which the multiplier $\lambda$ satisfies $|\lambda|<1$ or $\lambda=1$.
Now let $N$ be the number of non-repelling fixed points.
By perturbing $p$ so that fixed points with multiplier $\lambda$ satisfying $|\lambda|=1$
but $\lambda\neq 1$ become attracting, one can also show that 
$N\leq 2\cv(p)$.
Since $p$ has $\deg(p)$ fixed points, the conclusion follows.
\end{proof}
We note that the perturbation needed to pass from $|\lambda|=1$
to $|\lambda|<1$ does not need the results of Shishikura~\cite{Shishikura1987} about
quasiconformal surgery or the result of Douady and Hubbard~\cite{Douady1985}
that polynomial-like maps are quasiconformally conjugate to polynomials. 
Instead, using interpolation one perturbs $p$ to a polynomial $q$ of higher degree.
But viewed as a polynomial-like map on a suitable domain, the polynomial $q$
has the same degree as $p$; see~\cite[p.~67]{Bergweiler1991} and~\cite[p.~54]{Douady1983}
for more details.
The argument shows that Proposition~\ref{prop1} extends to polynomial-like maps.

To see how Proposition~\ref{prop1} can be applied to prove that composite polynomials have
repelling fixed points we note that if $p$ and $q$ are polynomials,
then 
\begin{equation}  \label{5a}
\cv(p\circ q) \leq \cv(p)+ \deg(q)-1 \leq \deg(p)+\deg(q)-2
\end{equation}
while
\begin{equation}  \label{5b}
\deg(p\circ q) = \deg(p)\cdot \deg(q).
\end{equation}
Proposition~\ref{prop1} yields that $p\circ q$ has a repelling fixed point if 
\begin{equation}  \label{5c}
\deg(p)\cdot \deg(q)>  2(\deg(p)+\deg(q)-2),
\end{equation}
which is equivalent to 
\begin{equation}  \label{5d}
(\deg(p)-2)\cdot (\deg(q)-2) > 0. 
\end{equation}
We thus have the following corollary of Proposition~\ref{prop1}.
\begin{corollary}\label{cor1}
Let $p$ and $q$ be polynomials of degree at least~$3$.
Then $p\circ q$ has a repelling fixed point.
\end{corollary}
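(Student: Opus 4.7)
The plan is to deduce the corollary directly from Proposition~\ref{prop1} (or rather its extension to polynomial-like maps, flagged in the remark after its proof) applied to the composition $p \circ q$. Concretely, I would establish the two sides of the hypothesis $\deg(p \circ q) > 2\cv(p \circ q)$ separately and then check that the resulting numerical inequality holds whenever $\deg(p), \deg(q) \geq 3$.

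The first step is to control the critical structure of a composition. The chain rule shows that $(p\circ q)'(z)=0$ exactly when either $q'(z)=0$ or $p'(q(z))=0$. Tracking the corresponding image values under $p\circ q$, the first family contributes at most $\cv(q)$ critical values of $p\circ q$, while the second contributes at most $\cv(p)$. Using the trivial bound $\cv(r)\leq \deg(r)-1$ for any polynomial $r$, this gives the inequality $\cv(p\circ q)\leq \cv(p)+\deg(q)-1\leq \deg(p)+\deg(q)-2$ already recorded in~\eqref{5a}. The degree side is immediate from~\eqref{5b}: $\deg(p\circ q)=\deg(p)\deg(q)$.

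The second step is the short algebraic check carried out in~\eqref{5c}–\eqref{5d}. Under these bounds, Proposition~\ref{prop1} delivers a repelling fixed point of $p\circ q$ as soon as $\deg(p)\deg(q)>2(\deg(p)+\deg(q)-2)$, and this rearranges to $(\deg(p)-2)(\deg(q)-2)>0$. Since the hypothesis gives $\deg(p),\deg(q)\geq 3$, both factors are at least $1$ and the inequality is strict, which concludes the argument. I do not foresee a genuine obstacle: the substantive content is packaged into Proposition~\ref{prop1}, and the corollary is essentially a combinatorial bookkeeping step. The only point worth flagging is that the bound on $\cv(p\circ q)$ is tight enough that the argument really does require both degrees to be at least $3$; if either equals $2$, the product $(\deg(p)-2)(\deg(q)-2)$ vanishes and this approach breaks down.
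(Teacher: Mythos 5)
Your argument is exactly the paper's: derive the bound $\cv(p\circ q)\le \cv(p)+\deg(q)-1\le \deg(p)+\deg(q)-2$, compare with $\deg(p\circ q)=\deg(p)\deg(q)$, rearrange the hypothesis of Proposition~\ref{prop1} to $(\deg(p)-2)(\deg(q)-2)>0$, and note this holds once both degrees are at least~$3$. The reasoning and the bookkeeping match \eqref{5a}--\eqref{5d}, so the proposal is correct and takes essentially the same route.
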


Proposition~\ref{prop1} yields -- under the hypotheses made -- 
the existence of a repelling fixed point, but it does not give any further information about
the multiplier of this fixed point.
It seems plausible that if $2\cv(p)$ is much smaller than~$\deg(p)$, then there should 
be a fixed point of large multiplier.

Denote by $\fix(p)$ the set of fixed points of a polynomial $p$ and let
\begin{equation}  \label{5e}
\Lambda(p)=\max\{|p'(\xi)|\colon \xi \in \fix(p)\}
\end{equation}
be the maximum of the moduli of the multipliers of the fixed points of~$p$.

\begin{problem}\label{prob1}
Let $p$ be a polynomial. Can one give a lower bound for $\Lambda(p)$ depending only
on $\deg(p)$ and $\cv(p)$ such that $\Lambda(p)>1$ if  $\deg(p)> 2 \cv(p)$?
\end{problem}
As mentioned, one would expect that
 $\Lambda(p)$ is large if $\cv(p)$ is small compared to~$\deg(p)$.
In the extremal case that $\cv(p)=1$ the polynomial $p$ is conjugate to the polynomial
$z\mapsto z^d$, with $d=\deg(p)$, and it follows that $\Lambda(p)=d$ in this case.
\begin{problem}\label{prob2}
Let $\varepsilon>0$. Does there exist $\delta>0$ such that if $p$ is a polynomial
satisfying $\cv(p)<\delta \deg(p)$, then $\Lambda(p)\geq (1-\varepsilon)d$?
\end{problem}
An answer to these questions could help to give a lower bound for the multipliers
of periodic points or fixed points of composite functions,
comparable to~\eqref{1c} or~\eqref{1d},
in the case that condition~\eqref{1a3} is not satisfied.

In the context of iteration rather than composition we also have
the following question~\cite[Conjecture~C]{Bergweiler2010}.
\begin{problem}\label{prob3}
Let $p$ be a polynomial of degree $d\geq 2$ and let  $n\geq 2$.
Do we have $\Lambda(p^n)\geq d^n$?
\end{problem}
The monomial $p(z)=z^d$ already considered shows that this would be best possible.
Eremenko and Levin~\cite[Theorem~3]{Eremenko}
have shown that if $p$ is a polynomial of degree $d\geq 2$ which is
not conjugate to the monomial $z\mapsto z^d$, then $\Lambda(p^n)> d^n$ 
for some $n\geq 2$.
The question is whether this holds for all $n\geq 2$.

Instead of considering the multipliers of all fixed points of $p^n$ one may 
also restrict to those of periodic points of period~$n$ and study the 
corresponding modification of $\Lambda(p^n)$. The polynomials that 
fail to have repelling periodic points of some period were classified
in~\cite{Bergweiler1991a,Chang2006}.

\noindent Mathematisches Seminar\\
Christian-Albrechts-Universit\"at zu Kiel\\
Ludewig-Meyn-Str.\ 4\\
24098 Kiel, Germany

\bigskip

\noindent Department of Applied Mathematics\\
South China Agricultural University\\
Guangzhou, 510642, P.~R.\ China

\end{document}